\theoremstyle{plain}
\newtheorem{theorem}{Theorem}
\newtheorem{lemma}[theorem]{Lemma}
\theoremstyle{definition}
\newtheorem*{ack}{Acknowledgement}
\newtheorem*{rem}{Remark}
\newcommand{\Xp}{\vphantom{X}^f\!X} 
\newcommand{\tXp}{\vphantom{X}^f\!\tilde X} 
\newcommand{\Yp}{\vphantom{Y}^f\!Y} 
\newcommand{\tYp}{\vphantom{Y}^f\!\tilde Y} 
\newcommand{\Zp}{\vphantom{Z}^f\!Z} 
\newcommand{\Sp}{\vphantom{S}^f\!S} 
\newcommand{\tSp}{\vphantom{S}^f\!\tilde S} 
\newcommand{\Tp}{T^f} 
\begin{document}

\title[Asymptotic properties of delayed Brownian motion]{Asymptotic properties of Brownian motion delayed by inverse subordinators}

\author{Marcin Magdziarz}
\address{Hugo Steinhaus Center, Institute of Mathematics and Computer Science,
Wroclaw University of Technology, Wyspianskiego 27, 50-370 Wroclaw, Poland}
\email{marcin.magdziarz@pwr.wroc.pl}

\author{Ren\'e L. Schilling}
\address{Technische Universit\"at Dresden, Institut f\"{u}r Mathematische Stochastik, 01062 Dresden, Germany}
\email{rene.schilling@tu-dresden.de}

\subjclass[2010]{Primary 60G17; Secondary 60G52}

\begin{abstract}
We study the asymptotic behaviour of the time-changed stochastic process
$\Xp(t)=B(\Sp (t))$, where $B$ is a standard one-dimensional Brownian motion
and $\Sp$ is the (generalized) inverse of a subordinator, i.e.\ the first-passage time process
corresponding to an increasing L\'evy process with Laplace exponent $f$.
This type of processes plays an important role in statistical physics in the modeling of
anomalous subdiffusive dynamics.
The main result of the paper is the proof of the mixing property
for the sequence of stationary increments of a subdiffusion process.
We also investigate various martingale properties,
derive a generalized Feynman-Kac formula, the laws of large numbers and of the iterated logarithm
for $\Xp$.
\end{abstract}

\maketitle

\section{Introduction}

Let $(\Omega,\mathcal{F},\mathbb{P})$ be some probability space. A subordinator $\Tp(t)$, $t\geq 0$, is an increasing L\'evy process, i.e.\ a stochastic process with stationary and independent increments whose sample paths are right-continuous with finite left limits. Being a Markov process, the law of $\Tp$ is uniquely characterized by the one-step transition functions. By definition, these are one-sided, infinitely divisible and the Laplace transform of $\Tp(t)$ can be written as, cf.~\cite{Sato},
\[
    \mathbb{E}\left( e^{-u\Tp(t)}   \right)=e^{-t f(u)},
\]
where the characteristic (Laplace) exponent $f(u)$ is a Bernstein function; it is well known, cf.\ \cite{Rene_book}, that all characteristic exponents are of the form
\[
    f(u)=\lambda u+\int_{(0,\infty)} (1-e^{-ux})\,\nu (dx)
\]
where $\lambda\geq 0$ is the drift parameter and $\nu$ is a L\'evy measure, i.e.\ a measure supported in $(0,\infty)$ such that
$\int_{(0,\infty)}\min\{1,x\}\,\nu(dx)<\infty$. For simplicity, we will assume that $\lambda = 0$. In order to exclude the case of compound Poisson processes, we will further assume that $\nu(0,\infty)=\infty$; in particular, the sample paths $t\mapsto \Tp_t$ are a.s.\ \emph{strictly} increasing.

The first-passage time process of the subordinator $\Tp$ is the (generalized, right-continuous) inverse
\begin{align}\label{inv_subordinator}
    \Sp(t)=\inf\{\tau>0\,:\, \Tp(\tau)>t \},\quad t\geq 0.
\end{align}
We will call $\Sp(t)$ an \emph{inverse subordinator}. The assumption $\nu(0,\infty)=\infty$ guarantees that the sample paths of $\Tp$ are a.s.\ strictly increasing, i.e.\ almost all paths $t\mapsto \Sp(t)$ are continuous.
Since the closure of the range of $\Tp$ has zero Lebesgue measure,
the trajectories of $\Sp$ are singular with respect to Lebesgue measure.
Note also that $\Tp$ is a transient L\'evy process and its potential measure satisfies
$U([0,x])=\mathbb{E}(\Sp(x))$. Moreover $U([0,x])/x$ is bounded as $x\rightarrow \infty$, see \cite[p.~41]{Bertoin}.

In what follows we denote by $\hat{h}(u)$ the Laplace transform of a function $h(t)$, i.e.
\[
    \hat{h}(u)=\int_0^\infty e^{-ut}h(t)\,dt.
\]
It is straightforward to verify that
\begin{equation}
\label{Laplace_Inv_Sub}
\hat{s}(x,u)=\frac{f(u)}{u} e^{-x f(u)},
\end{equation}
where $s(x,t)$ is the probability density function of $\Sp(t)$.
In particular, we get for any independent, exponentially distributed random time $\tau\sim u\,e^{-ut}\,dt$
\begin{equation}\label{random-laplace}\begin{aligned}
    \mathbb E e^{\xi \cdot\Sp(\tau)}
    &= \int_0^\infty\kern-5pt\int_0^\infty e^{\xi x} s(x,t) u\,e^{-ut}\,dt\,dx\\
    &= \int_0^\infty e^{\xi x} f(u) \,e^{-x f(u)}\,dx
    = \frac{f(u)}{f(u)-\xi}.
\end{aligned}\end{equation}

Inverse subordinators have found many applications in probability theory.
For their relationship with local times of some Markov processes, see \cite{Bertoin}.
Similarities between inverse subordinators and renewal processes
were investigated in \cite{Bertoin2,Harn,Lageras}.
Applications of inverse subordinators in finance and physics
are discussed in \cite{Winkel} and \cite{Meer1,Jurlewicz_Weron,SWW}, respectively.

In this paper we study some sample path properties of the process
\begin{equation}\label{subdiffusion}
    \Xp(t)
    =B(\Sp(t)), \quad t\geq 0.
\end{equation}
Here, $B$ is a standard one-dimensional Brownian motion (Wiener process)
and $\Sp$ is an inverse subordinator; we assume that $\Sp$ and $B$ are independent. For every jump of the subordinator $\Tp$
there is a corresponding flat period of its inverse $\Sp$. These flat periods represent \emph{trapping events} in which the test particle gets immobilized in a trap. Trapping slows down the overall dynamics of the diffusion process $B$, therefore $\Xp(t)=B(\Sp(t))$
is called a \emph{subdiffusion process}.

In Physics, subdiffusive dynamics is frequently described by fractional diffusion equations \cite{metzler2000}.
Using the relation $\mathbb{P}(\Sp(t)\leq s) = \mathbb{P}(\Tp(s)\geq t)$ and the Fourier-Laplace transform in $x$ and $t$, respectively, one sees that the probability density function $p(x,t)$ of the process $\Xp(t)$
satisfies the following generalized diffusion equation
\begin{equation}\label{ffp_id}
    \frac{\partial p(x,t)}{\partial t}
    =
    \frac{1}{2} {\Phi}_t \frac{\partial^2}{\partial x^2} p(x,t),
\end{equation}
cf.\ \cite{Klaf_Sok,Magdziarz1,Magdziarz3}; this is also a special case of Theorem \ref{thm-2} below.

%

Here, ${\Phi}_t$ is an integro-differential operator given by
\begin{equation}\label{operator}
    \Phi_t g(t)=\frac{d}{dt} \int_0^t M(t-s)g(y)\,ds
\end{equation}
for sufficiently smooth functions $g$. The memory kernel $M(t)$ is defined via its Laplace transform
\begin{equation}\label{kernel}
    \hat{M}(u)
    =\int_0^\infty e^{-ut}M(t)\,dt
    =\frac{1}{f(u)}.
\end{equation}
One can also write \eqref{ffp_id} with an operator $\Phi_t^{-1}$ on the left, i.e.\ a truly ``fractional'' time evolution. This operator is also a
convolution operator, where the convolution kernel has the Laplace transform $f(u)/u$.

The most important case in applications is the case of $\alpha$-stable waiting times,
which corresponds to $f(u)= u^\alpha$ with $\alpha\in(0,1)$ being the stability parameter.
In this case, the operator $\Phi_t$ is the Riemann-Liouville fractional derivative $_0D_t^{1-\alpha}$, cf.\  \cite{samko},
\[
    \Phi_t g(t)
    ={_0D_t}^{1-\alpha} g(t)
    =\frac{1}{\Gamma(\alpha)} \frac{d}{dt}\int_0^t (t-y)^{\alpha-1} g(y)\,dy.
\]
Then, Eq.~\ref{ffp_id} becomes the fractional diffusion equation \cite{MBK,Meer4,Meer1,Meer2,woyczynski}.
The case of space-time fractional derivative operators can be found in \cite{Baeumer1a}.
Subdiffusion phenomena with $\alpha$-stable waiting times have been empirically confirmed
in a number of settings: Charge carrier transport in amorphous semiconductors, nuclear magnetic
resonance, diffusion in percolative and porous systems, transport on fractal geometries, dynamics
of a bead in a polymeric network, and protein conformational dynamics, see \cite{metzler2000}.
The corresponding process $\Xp$ appears in a natural way as a scaling limit
of uncoupled continuous-time random walk with heavy-tailed waiting times \cite{Meer5,Meer6}.
Path properties of $\Xp$ with $f(u)= u^\alpha$ have been investigated in \cite{Meer3,Nane,Magdziarz2}.

Another important example is the tempered-stable case \cite{bib:Cartea2007,Rosinski}, which corresponds to
$f(u)=(u+\lambda)^\alpha-\lambda^\alpha$ with $\lambda>0$ and $0<\alpha<1$.
This type of anomalous dynamics has recently been investigated in \cite{Magdziarz3,Meer7,Meer2a}.
Tempered stable distributions are particularly attractive in the modeling
of transition from the initial subdiffusive character of motion to the standard diffusion for long times.
In physics, applications of tempered stable distributions
in the context of astrophysics and relaxation can be found in \cite{SWW}.
Modeling of the lipid granules dynamics with the use of tempered subdiffusion
has recently been proposed in \cite{Lene}.
Note that the subordinate process $B(\Tp(t))$, where $\Tp(t)$ is the tempered stable
subordinator, is the relativistic stable process
with Fourier exponent equal to $(\xi^2 + m^{2/\beta})^{\beta/2} - m$, \cite{relativistic}.
Here $\lambda = m^{2/\beta}$ and $\alpha=\beta/2$.

Another interesting example are distributed order fractional diffusion equations \cite{Kochubei,Chechkin,Meer9},
which correspond to the case $f(u)=\int_0^\infty (1-e^{-ux})\,\nu (dx)$ with
$\nu(t,\infty)=\int_0^1 t^{-\beta} \,\mu(d\beta)$. Here, $\beta\in (0,1)$ and $\mu$ is
some distribution supported in $[0,1]$.
This type of Laplace exponent leads to the ultra-slow dynamics displayed by $\Xp$.
The corresponding fractional Cauchy problem has recently been analyzed in \cite{Mijena}.

Our paper is organized as follows: In the next section, we will investigate various properties of
the subdiffusion process $\Xp$. In particular, we will describe its martingale properties,
derive a generalized Feynman-Kac formula, and the laws of large numbers of the iterated logarithm for $\Xp$.
Our results extend those for the $\alpha$-stable case in \cite{Nane,Magdziarz2}.
In Section 3 we will construct a stationary sequence of increments
of the appropriately modified process $\Xp$. We will show that this sequence is
ergodic and mixing. This is the main result of the paper; it can be applied to
verify ergodicity and mixing of all subdiffusive complex systems modeled by the generalized diffusion equation \eqref{ffp_id}.

\section{Asymptotic behaviour of trajectories}

We begin with a result which establishes martingale properties of the subdiffusion process $\Xp$.
\begin{theorem}\label{thm-1}
\textup{(i)}
    Both $\Xp(t)$ and $\Yp(t):=\exp\left\{ \Xp(t)-\frac{1}{2}\Sp (t) \right\}$, $t\geq0$, are martingales with respect to the probability measure $\mathbb{P}$.

\smallskip
\textup{(ii)}
    Let $t\in [0,T]$. Then for each $\epsilon\geq 0$ the process $\Zp(t)=\exp\left\{ \Xp(t)\right\}$ is a  martingale with respect to the probability measure $\mathbb{P}_\epsilon$ defined as
    \begin{equation*} 
        \mathbb{P}_\epsilon(A)
        =c_\epsilon \cdot\int_A  \exp\left\{ -\frac{1}{2} \Xp(T)-\left( \epsilon+\frac{1}{8}\right)\Sp(T) \right\} d\mathbb{P}.
    \end{equation*}
    Here $A\in\mathcal{F}$ and $c_\epsilon^{-1}= \int_\Omega  \exp\left\{ -\frac{1}{2} \Xp(T)-\left( \epsilon+\frac{1}{8}\right)\Sp(T) \right\} d\mathbb{P}$ is the normalizing constant.
\end{theorem}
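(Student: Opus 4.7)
The common thread in both parts is that $\Sp$ is independent of $B$, so conditioning on $\sigma(\Sp)$ reduces each martingale assertion to a classical fact about Brownian motion evaluated at the (now deterministic) times $\Sp(s)\le\Sp(t)$. Throughout I would work with the natural filtration of the pair $(\Xp,\Sp)$,
\[
    \mathcal{G}_t:=\mathcal{F}^B_{\Sp(t)}\vee\sigma\bigl(\Sp(r):r\le t\bigr),\qquad \mathcal{F}^B_u:=\sigma(B(v):v\le u).
\]
Integrability of $\Xp(t)$ follows from the conditional estimate $\mathbb{E}|\Xp(t)|\le\sqrt{2\mathbb{E}\Sp(t)/\pi}$ together with the bound $\mathbb{E}\Sp(t)=U([0,t])<\infty$ recalled in the introduction; integrability of $\Yp(t)$ is immediate from $\mathbb{E}\Yp(t)=\mathbb{E}[\mathbb{E}[\Yp(t)\mid\sigma(\Sp)]]=1$.

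For part (i), fix $0\le s\le t$. Conditionally on $\sigma(\Sp)$, the numbers $\Sp(s),\Sp(t)$ are constants while $B$ remains a standard Brownian motion, so applying the optional sampling theorem to the $\mathcal{F}^B_u$-martingale $B$ at these deterministic times gives
\[
    \mathbb{E}\bigl[\Xp(t)\bigm|\mathcal{F}^B_{\Sp(s)}\vee\sigma(\Sp)\bigr]=B(\Sp(s))=\Xp(s),
\]
and an outer conditioning on $\mathcal{G}_s\subseteq\mathcal{F}^B_{\Sp(s)}\vee\sigma(\Sp)$ yields the martingale property of $\Xp$. Exactly the same argument, applied with the Brownian exponential martingale $u\mapsto e^{B(u)-u/2}$ in place of $B$, gives the martingale property of $\Yp$.

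For part (ii), I would use Bayes' formula together with a conditional Girsanov transformation. The key is to split the density as
\[
    \frac{d\mathbb{P}_\epsilon}{d\mathbb{P}}
    =c_\epsilon\,\exp\!\bigl\{-\tfrac12 B(\Sp(T))-\tfrac18\Sp(T)\bigr\}\,\cdot\,\exp\{-\epsilon\Sp(T)\},
\]
where the first factor is the value at $u=\Sp(T)$ of the $\mathcal{F}^B_u$-Dol\'eans--Dade exponential of $-B/2$, and the second factor depends only on $\Sp$. Conditioning on $\sigma(\Sp)$ absorbs the second factor together with $c_\epsilon$ into a normalising constant, leaving the purely Girsanov density $\exp\{-B(\Sp(T))/2-\Sp(T)/8\}$. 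By Girsanov's theorem applied on the conditionally deterministic interval $[0,\Sp(T)]$, the process $\tilde B(u):=B(u)+u/2$ is a standard Brownian motion under $\mathbb{P}_\epsilon(\cdot\mid\sigma(\Sp))$, so
\[
    e^{B(u)}=\exp\{\tilde B(u)-u/2\}
\]
is a $\mathbb{P}_\epsilon(\cdot\mid\sigma(\Sp))$-martingale in $u$. Evaluating at the conditionally deterministic time $\Sp(t)$ and re-running the optional-sampling/tower step of part (i) yields the claimed martingale property of $\Zp$ on $(\mathcal{G}_t)$ under $\mathbb{P}_\epsilon$.

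The main technical obstacle is choosing the right conditioning so that the $\Sp$-dependent factor $e^{-\epsilon\Sp(T)}$ does not disturb the Brownian measure change. It is precisely the cancellation with the normalising constant of $\mathbb{P}_\epsilon(\cdot\mid\sigma(\Sp))$ that isolates the pure Girsanov piece; the particular coefficients $-1/2$ and $-1/8$ in the definition of $\mathbb{P}_\epsilon$ are exactly those for which the $B$-dependent part of the density is a genuine $\mathbb{P}$-martingale, so that no further compensation is needed and the $\epsilon$-term only reweights the distribution of $\Sp$.
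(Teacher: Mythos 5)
Your proof is correct and rests on the same two pillars as the paper's: the independence of $B$ and $\Sp$, and the identity $e^{B(u)}\cdot e^{-B(u)/2-u/8}=e^{B(u)/2-u/8}$, which makes the $B$-dependent part of the density of $\mathbb{P}_\epsilon$ an exponential martingale. The packaging differs. In part (i) the paper puts the whole of $\sigma(\Sp)$ into $\mathcal{F}_0$, so that each $\Sp(t)$ becomes a stopping time, and applies Doob's optional sampling after localizing $B$ by the exit times $T_n=\inf\{u:|B(u)|=n\}$; you instead condition on $\sigma(\Sp)$ and use the martingale property of $B$ and of $e^{B(u)-u/2}$ at the now-deterministic times $\Sp(s)\le\Sp(t)$. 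These are two standard formalizations of the same idea, but yours has a genuine payoff for $\Yp$: since the Brownian exponential martingale identity is exact at deterministic times, you obtain the conditional martingale identity (and hence integrability, $\mathbb{E}\Yp(t)=1$) directly, whereas the paper first gets $\Yp$ as a local martingale via \cite[Proposition IV.3.4]{RevuzYor} and must then prove $\mathbb{E}\big[\sup_{u\le t}\Yp(u)\big]<\infty$ through Doob's maximal inequality and the exponential moment bound $\mathbb{E}\,e^{2\Sp(t)}<\infty$ coming from \eqref{random-laplace}. In part (ii) the paper works in the ambient filtration: it observes that $V(t)W(t)=e^{B(t)/2-t/8}$ is a $\mathbb{P}$-martingale, stops it at $\Sp(T)$, multiplies by the $\mathcal{F}_0$-measurable factor $e^{-\epsilon\Sp(T)}$, and concludes via the Bayes rule, uniform integrability and optional sampling at $\Sp(t)$; your conditional Girsanov transformation on $[0,\Sp(T)]$ given $\sigma(\Sp)$ is the same change of measure read fibrewise, and your observation that $e^{-\epsilon\Sp(T)}$ merely reweights the law of $\Sp$ is precisely the role it plays there. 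The one point you should make rigorous in a write-up is the meaning of $\mathcal{F}^B_{\Sp(s)}$ and the substitution of the random times $\Sp(s),\Sp(t)$ into the deterministic-time martingale identities; this needs a monotone class/Fubini argument based on independence, or equivalently the paper's device of enlarging the filtration by $\sigma(\Sp)$ at time zero.
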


\begin{proof}
(i) The proof of this part is similar to the proof \cite[Theorem 2.1]{Magdziarz2}. Note first that the quadratic variation of $\Xp(t)$ satisfies
$
    \langle \Xp,\Xp\rangle_t = \vphantom{\rangle}^{f}\!\langle B,B\rangle_t  = \Sp(t).
$
Set
\begin{equation}\label{filtr}
    \mathcal{F}_t
    =\bigcap_{u>t}  \sigma \big(\{B(y)\,:\, 0\leq y\leq u\},\; \{\Sp(t)\,:\, t\geq 0\}\big).
\end{equation}
By definition, $\{\mathcal{F}_t\}$ is right-continuous, $(B(t),\mathcal F_t)_{t\geq 0}$ is a martingale,
and for every fixed $t_0>0$ the random variable $\Sp(t_0)$ is a stopping time with respect to $\{\mathcal{F}_t\}$.
Thus $\{\mathcal G_t\}$ where $\mathcal{G}_t=\mathcal{F}_{\Sp(t)}$ is a well defined filtration.
Let us introduce the sequence of $\{\mathcal{F}_t\}$-stopping times
\[
    T_n = \inf\{u>0\,:\, |B(u)|=n\}.
\]
By Doob's optional sampling theorem we have for $s<t$
\[
    \mathbb{E}(B(T_n\wedge \Sp(t)) \;|\; \mathcal{G}_s)= B(T_n\wedge \Sp(s)).
\]
The right-hand side of the above equation converges to $B(\Sp(s))$, whereas the left-hand side converges to $E(B(\Sp(t)) \;|\; \mathcal{G}_s)$ as $n\rightarrow \infty$. Therefore, $\Xp$ is a martingale.

From \cite[Proposition IV.3.4]{RevuzYor} we get that $\Yp$ is a local martingale. To prove that it is also a martingale, we have to
verify the integrability of the random variable $\sup_{0\leq u \leq t} \Yp(u)$.
Using Doob's maximal inequality and the independence of $B$ and $\Sp$, we get
\[
    \mathbb{E}\left[\left ( \sup_{0\leq u \leq t} \exp\left \{ B(\Sp(u))\right \}\right)^2\right]
    \leq
    4 \mathbb{E}\left[\exp\left \{2 B(\Sp(t))\right \}\right]
    =
    4 \mathbb{E}\left( \exp\left \{2 \cdot\Sp(t)\right \}\right);
\]
Assume that the parameter $u$ of $\tau$  in \eqref{random-laplace} is so large that $f(u)>2$. Then we see that the expression on the right-hand side is finite. Thus,
$\mathbb{E}\big[\sup_{0\leq u \leq t} \Yp(u)\big]<\infty$, and $\Yp$ is a martingale.

\bigskip\noindent
(ii) Put
\[
    V(t)=\exp\left\{ -\frac{1}{2}B(t)-\frac{1}{8} t  \right\}
    \quad\text{and}\quad
    W(t)=\exp\left\{ B(t)\right\}.
\]
Then
\[
    V(t)W(t)=\exp\left\{ \frac{1}{2} B(t)-\frac{1}{8} t  \right\}.
\]
Therefore, the product $V\cdot W$ is an $(\mathcal{F}_t,\mathbb{P})$-martingale, and so is the stopped process $V(\cdot\wedge \Sp(T))W(\cdot\wedge \Sp(T))$.
Since the bounded random variable $e^{-\epsilon \Sp(T)}$ is $\mathcal{F}_0$-measurable, it follows that $e^{-\epsilon \Sp(T)}V(\cdot\wedge \Sp(T))W(\cdot\wedge \Sp(T))$ is an $(\mathcal{F}_t,\mathbb{P})$-martingale. This in turn implies that $W(\cdot\wedge \Sp(T))$
is an $(\mathcal{F}_t,\mathbb{P}_\epsilon)$-martingale.

Denote by $\mathbb{E}^{\mathbb{P}_\epsilon}$ the expectation with respect to the measure $\mathbb{P}_\epsilon$. Then, one shows similarly to (i) that
\begin{equation*}
    \mathbb{E}^{\mathbb{P}_\epsilon} \left(  \sup_{t\geq 0}W(t\wedge \Sp(T))\right)
    = \mathbb{E}^{\mathbb{P}_\epsilon} \left(  \sup_{t\leq \Sp(T)}W(t)\right)
    <\infty.
\end{equation*}
Thus, $W(\cdot\wedge \Sp(T))$ is a uniformly integrable $(\mathcal{F}_t,\mathbb{P}_\epsilon)$-martingale.
Consequently, there exists a random variable $H$ such that $W(t\wedge \Sp(T))=\mathbb{E}^{\mathbb{P}_\epsilon} \left( H|\mathcal{F}_t\right)$. Moreover
\[
    \Zp(t)=W(\Sp(t)\wedge \Sp(T))=\mathbb{E}^{\mathbb{P}_\epsilon} \left( H|\mathcal{F}_{\Sp(t)}\right),
\]
and this implies that $\Zp$ is an $(\mathcal{F}_{\Sp(t)},\mathbb{P}_\epsilon)$-martingale.
\end{proof}

The next result is a generalization of the Feynman-Kac formula.
Consider the standard diffusion process $V$ given by the following It\^{o} stochastic
differential equation
\begin{equation*}\begin{aligned}
    dV(t)   &=  \mu(V(t))\,dt+\sigma(V(t))\,dB(t)\\
    V(0)    &=  x.
\end{aligned}\end{equation*}
In order to ensure the existence and uniqueness of the solution we assume the usual (local)
Lipschitz and (global) growth conditions on $\mu$ and $\sigma$. Then the infinitesimal generator of $V(t)$ is
\[
    \mathcal{A}\phi(x)
    =\mu(x)\frac{\partial}{\partial x}\,\phi(x)+\frac{1}{2}\sigma^2(x)\frac{\partial^2}{\partial x^2}\phi(x),
    \quad\phi\in C_c^2(\mathbb R).
\]
\begin{theorem}\label{thm-2}
Let $g$ and $h$ be continuous functions, such that $g$ is bounded and $h\geq 0$.
Then the function
\begin{equation}\label{FK_function}
    v(x,t)=\mathbb{E}^x \left( \exp\left\{ - \int_0^{\Sp (t)}h(V(u))\,du  \right\} g(V(\Sp (t)))\right)
\end{equation}
satisfies the generalized Feynman-Kac equation
\begin{equation}\label{FK_eq}
    \frac{\partial v(x,t)}{\partial t}=\Phi_t\left[\mathcal{A}v(x,t)-h(x)v(x,t)\right]
\end{equation}
with the initial condition $v(x,0)=g(x)$ and the operator $\Phi_t$ from \eqref{operator}.
\end{theorem}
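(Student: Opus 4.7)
The plan is to reduce Theorem \ref{thm-2} to the classical Feynman--Kac formula for $V$ by conditioning on the independent random time $\Sp(t)$, and then to convert the resulting representation into \eqref{FK_eq} by Laplace transforming in $t$ and using \eqref{Laplace_Inv_Sub} together with \eqref{kernel}.

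First I would introduce the auxiliary function
\[
   w(x,\sigma) = \mathbb{E}^x\left( \exp\left\{-\int_0^\sigma h(V(u))\,du\right\} g(V(\sigma)) \right),
\]
which, under the assumed regularity of $\mu$, $\sigma$, $g$ and $h$, satisfies the classical Feynman--Kac equation
\[
   \partial_\sigma w(x,\sigma) = \mathcal{A}w(x,\sigma) - h(x)\,w(x,\sigma), \qquad w(x,0)=g(x).
\]
Because the subordinator defining $\Sp$ is independent of $B$ (hence of $V$), conditioning on $\Sp(t)$ yields the mixture
\[
   v(x,t) = \int_0^\infty w(x,\sigma)\, s(\sigma,t)\,d\sigma,
\]
where $s(\sigma,t)$ is the density of $\Sp(t)$ with Laplace transform given by \eqref{Laplace_Inv_Sub}. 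The initial condition $v(x,0)=g(x)$ is immediate from $\Sp(0)=0$.

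Next I would take the Laplace transform in $t$ and use \eqref{Laplace_Inv_Sub} to obtain
\[
   \hat v(x,u) = \int_0^\infty w(x,\sigma)\,\hat s(\sigma,u)\,d\sigma = \frac{f(u)}{u}\int_0^\infty e^{-\sigma f(u)} w(x,\sigma)\,d\sigma = \frac{f(u)}{u}\,\tilde w(x,f(u)),
\]
where $\tilde w(x,\xi)$ denotes the Laplace transform of $w$ in the $\sigma$-variable. Laplace transforming the Feynman--Kac PDE in $\sigma$ gives $\xi\tilde w(x,\xi)-g(x) = (\mathcal{A}-h(x))\tilde w(x,\xi)$. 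Setting $\xi=f(u)$ and substituting $\tilde w(x,f(u)) = (u/f(u))\hat v(x,u)$ one arrives at the key identity
\[
   u\,\hat v(x,u) - g(x) = \frac{u}{f(u)}\,\bigl(\mathcal{A}-h(x)\bigr)\hat v(x,u).
\]

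Finally I would invert the Laplace transform. The left-hand side is the transform of $\partial_t v(x,t)$ since $v(x,0)=g(x)$. For the right-hand side, \eqref{kernel} gives $\widehat{M\ast \phi}(u) = \hat\phi(u)/f(u)$, and since $(M\ast\phi)(0^+)=0$ the Laplace symbol of $\Phi_t$ from \eqref{operator} is precisely $u/f(u)$. Hence the right-hand side is the Laplace transform of $\Phi_t[\mathcal{A}v(x,t)-h(x)v(x,t)]$, and uniqueness of Laplace transforms delivers \eqref{FK_eq}.

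The main obstacle I expect is not the formal computation but the regularity needed to justify it rigorously: one has to know that $v(\cdot,t)\in C^2$ so that $\mathcal{A}v(x,t)$ makes sense pointwise, that $t\mapsto (\mathcal{A}-h)v(x,t)$ is locally integrable so that $\Phi_t$ may act on it, and that the Fubini step and the vanishing boundary term $(M\ast\phi)(0^+)=0$ are legitimate. These points can be handled by combining the smoothing properties of the diffusion $V$ (which transfer regularity in $x$ from $g$ to $w$ and hence to $v$) with dominated convergence, perhaps after first establishing the identity for $g\in C_c^2(\mathbb{R})$ and then extending by approximation.
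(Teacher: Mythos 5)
Your proposal is correct and follows essentially the same route as the paper: condition on the independent time change to write $v(x,t)=\int_0^\infty w(x,\sigma)s(\sigma,t)\,d\sigma$, apply the classical Feynman--Kac equation to $w$ in Laplace space, substitute $u\mapsto f(u)$ using \eqref{Laplace_Inv_Sub} to get $u\hat v(x,u)-g(x)=\frac{u}{f(u)}(\mathcal{A}-h)\hat v(x,u)$, and invert. Your added remarks on the regularity of $v$ and the identification of $u/f(u)$ as the Laplace symbol of $\Phi_t$ make explicit points the paper leaves implicit, but they do not change the argument.
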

\begin{proof}
Denote by $w(x,t)$ the function
\[
    w(x,t)=\mathbb{E}^x \left( \exp\left\{ - \int_0^{t}h(V(u))\,du  \right\} g(V(t))\right)
\]
and recall that $s(y,t)$ is the probability density of $\Sp(t)$. With the above assumptions on $g$ and $h$, $w(x,t)$ is in $C^{2,1}(\mathbb{R}\times\mathbb{R_+})$.

Using the classical Feynman-Kac formula, we get that $w(x,t)$
satisfies in the Laplace space the equation
\begin{equation}\label{Laplace_w}
    u\hat{w}(x,u)-w(x,0)
    =\mathcal{A}\hat{w}(x,u)-h(x)\hat{w}(x,u).
\end{equation}
Since $\Sp$ and $B$ are independent, we obtain
\begin{equation*}
    v(x,t)
    =\int_\mathbb{R} w(x,y) s(y,t)\,dy,
\end{equation*}
which by \eqref{Laplace_Inv_Sub} implies that
\begin{equation}\label{main_Laplace}
    \hat{v}(x,u)=\frac{f(u)}{u}\hat{w}(x,f(u)).
\end{equation}
Replacing $u \rightarrow f(u)$ in \eqref{Laplace_w} we get
\begin{equation*}
    f(u)\hat{w}(x,f(u))-w(x,0)
    =\mathcal{A}\hat{w}(x,f(u))-h(x)\hat{w}(x,f(u)).
\end{equation*}
Consequently, applying \eqref{main_Laplace} and the fact that $w(x,0)=v(x,0)$  we conclude that $\hat{v}(x,u)$ satisfies
\begin{equation*}
    u\hat{v}(x,u)-v(x,0)
    =\frac{u}{f(u)}\left[\mathcal{A}\hat{v}(x,u)-h(x)\hat{v}(x,u)\right].
\end{equation*}
Inverting the Laplace transform, we get that $v(x,t)$ satisfies \eqref{FK_eq}.
\end{proof}

We will now study the asymptotic behaviour of the trajectories of $\Xp$.
\begin{theorem}[Law of large numbers]
The trajectories of $\Xp(t)$ satisfy
\[
    \lim_{t\rightarrow\infty}\frac{\Xp(t)}{t}=0 \;\;\;a.s.
\]
\end{theorem}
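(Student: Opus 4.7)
The plan is to bound the two factors in $\Xp(t)/t=B(\Sp(t))/t$ separately, exploiting the independence of $B$ and $\Sp$ and the sub-linear growth of each component on sets of full measure.

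First I would show that $\Sp(t)/t$ is almost surely bounded as $t\to\infty$. Observe that $\Sp(t)\leq Kt$ whenever $\Tp(Kt)>t$, by the definition of $\Sp$ as a first-passage time together with the a.s.\ strict monotonicity of $\Tp$. To produce such a $K$ I apply the strong law of large numbers to the L\'evy process $\Tp$: setting $\mu:=\mathbb E(\Tp(1))=f'(0+)\in(0,\infty]$, one has $\Tp(s)/s\to\mu$ a.s.\ as $s\to\infty$ (Kolmogorov's SLLN applied to the i.i.d.\ increments $\Tp(n)-\Tp(n-1)$, extended to continuous $s$ via monotonicity; the case $\mu=\infty$ is handled by the standard truncation argument for infinite-mean positive i.i.d.\ sums). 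If $\mu<\infty$, fix any $K>1/\mu$; if $\mu=\infty$, take $K=1$. In either case, a.s.\ for all sufficiently large $t$ one has $\Tp(Kt)>t$, hence $\Sp(t)\leq Kt$.

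Second, I would use the classical fact that a.s.\ $\sup_{0\leq s\leq T}|B(s)|/T\to 0$ as $T\to\infty$, which follows from the law of the iterated logarithm, or more elementarily from the SLLN $B(n)/n\to 0$ combined with Gaussian maximal estimates on the dyadic blocks $[n,n+1]$. Independence of $B$ and $\Sp$ allows me to intersect the two full-measure events. On this intersection, for all large $t$,
\[
    \frac{|\Xp(t)|}{t}=\frac{|B(\Sp(t))|}{t}\leq\frac{\sup_{0\leq s\leq Kt}|B(s)|}{t}=K\cdot\frac{\sup_{0\leq s\leq Kt}|B(s)|}{Kt}\longrightarrow 0,
\]
which delivers the conclusion.

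The main obstacle is the almost-sure linear bound on $\Sp$: the expectation bound $U([0,t])/t=\mathbb E(\Sp(t))/t=O(1)$ mentioned in the introduction is only an $L^1$ statement, and a direct Borel--Cantelli via Markov's inequality fails to yield summability. The SLLN for $\Tp$ bypasses this by extracting almost-sure information about $\Tp$ and then inverting to obtain a bound on $\Sp$. The infinite-mean regime $\mu=\infty$ is actually easier, since then $\Sp(t)/t\to 0$ a.s.\ outright; the only case requiring a nontrivial choice of the constant $K$ is $\mu\in(0,\infty)$.
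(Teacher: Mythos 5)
Your proof is correct, but it follows a genuinely different route from the paper's. The paper treats $\Xp$ as an $L^2$-martingale: it blocks time dyadically, applies Markov's inequality to $\bigl(\sup_{2^n\le t\le 2^{n+1}}|\Xp(t)|\bigr)^2$, controls this supremum by Doob's maximal inequality via $\mathbb{E}\bigl(|\Xp(t)|^2\bigr)=\mathbb{E}\bigl(\Sp(t)\bigr)=U([0,t])$, and then uses only the boundedness of $U([0,t])/t$ to make the Borel--Cantelli series converge. (So your closing remark is slightly off: a Markov--Borel--Cantelli argument \emph{does} work, precisely because one applies it to second moments over dyadic blocks; the linear growth of $\mathbb{E}\bigl(\Sp(t)\bigr)$ against the $t^2$ in the denominator is exactly what yields summability.) Your argument instead extracts a pathwise linear bound $\Sp(t)\le Kt$ eventually a.s.\ from the strong law for the subordinator $\Tp$ (with the correct case split $\mu<\infty$ versus $\mu=\infty$, and the correct observation that $\mu=f'(0+)>0$ since $\nu\neq 0$), and combines it with the a.s.\ sublinear growth of $\sup_{s\le T}|B(s)|$. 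The intersection of the two full-measure events needs no independence, but that is harmless. What your route buys is a strictly stronger conclusion essentially for free: on the same event the law of the iterated logarithm gives $\Xp(t)=O\bigl(\sqrt{t\log\log t}\,\bigr)$ a.s., not merely $o(t)$. What the paper's route buys is that it uses no pathwise information about $\Tp$ or $\Sp$ at all, only the first-moment bound on the potential measure together with the martingale property of $\Xp$, and it therefore transfers verbatim to other square-integrable martingales time-changed by $\Sp$.
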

\begin{proof}
Fix $\epsilon>0$ and define
\[
    A_n=\left \{\sup_{2^n\leq t \leq 2^{n+1}}\left |\frac{\Xp(t)}{t}\right|>\epsilon \right\}.
\]
Using Markov's inequality we get that
\[
    \epsilon^2 \mathbb{P}(A_n)
    \leq \mathbb{E} \left[\left(\sup_{2^n\leq t \leq 2^{n+1}}\left |\frac{\Xp(t)}{t}\right|\right)^2\right]
    \leq
    \frac{\mathbb{E} \left[\left(\sup_{2^n\leq t \leq 2^{n+1}}\left|\Xp(t)\right|\right)^2\right]}{2^{2n}}.
\]
Using the fact that  $\Xp(t)$ is a martingale, we obtain from the Doob's maximal inequality the following bound
\[
    \mathbb{E} \left[\left(\sup_{2^n\leq t \leq 2^{n+1}}\left|\Xp(t)\right|\right)^2 \right]
    \leq
    4\mathbb{E} \left(\big|\Xp(2^{n+1})\big|^2\right)=4\mathbb{E} \left(\Sp(2^{n+1})\right).
\]
Since $\mathbb{E}(\Sp(x))/x=U([0,x])/x$ is bounded as $x\rightarrow \infty$,
thus
\[
    \mathbb{E} \left(\Sp(2^{n+1})\right)\leq c\,2^{n+1}
\]
for large $n$ and some positive constant $c$. Thus
\[
    \mathbb{P}(A_n)\leq \frac{4c\,2^{n+1}}{2^{2n}\epsilon^2}
\]
for large enough $n$, and so
\[
    \sum_{n=1}^\infty \mathbb{P}(A_n)<\infty.
\]
Finally, using the Borel-Cantelli lemma we obtain the desired result.
\end{proof}

Now, we turn to the law of the iterated logarithm. It is known that for the inverse subordinator $\Sp(t)$ we have, see \cite{Fristedt},
\[
    \limsup_{t\rightarrow \infty}\frac{\Sp(t)}{\phi(t)}=1
\]
for a certain function $\phi(t)$.
Thus, by the celebrated law of the iterated logarithm for a Brownian motion $B$, we obtain
the following bound for $\Xp(t)=B(\Sp(t))$
\[
    \limsup_{t\rightarrow \infty} \frac{\Xp(t)}{\sqrt{2\phi(t)\log\log (\phi(t))}}\leq 1.
\]
This result is, however, not sharp, cf.\ \cite{Magdziarz2}: Just observe that the large increments of $B$ do not
coincide with those of $\Sp$. The next theorem determines precisely the asymptotic behaviour of the trajectories of
$\Xp$ for small and large times.

\begin{theorem}[Law of the iterated logarithm]
{\upshape (i)}
    Assume that for some $\epsilon>0$ the Laplace exponent of $\Tp$ satisfies $f(u)\geq u^\epsilon$ for sufficiently large $u$. Then, for $\gamma>1$ there exists a constant $c>0$ such that
    \[
        \limsup_{t\searrow 0}\frac{\Xp(t)}{g(t)}=1
        \quad\text{and}\quad
        \liminf_{t\searrow 0}\frac{\Xp(t)}{g(t)}=-1
        \quad\text{a.s.}
    \]

{\upshape (ii)}
    Assume that for some $\epsilon>0$ the Laplace exponent of $\Tp$ satisfies $f(u)\leq u^\epsilon$ for sufficiently small $u$. Then, for $\gamma>1$ there exist a constant $c>0$ such that
    \[
        \limsup_{t\nearrow \infty}\frac{\Xp(t)}{g(t)}=1
        \quad\text{and}\quad
        \liminf_{t\nearrow \infty}\frac{\Xp(t)}{g(t)}=-1
        \quad\text{a.s.}
    \]

    Here, the function $g(t)$ is the inverse of
    \begin{equation}\label{normalizing_fun}
        h(t)
        =\frac{c\log(|\log (t)|)}{\eta\left(\gamma t^{-1}\log(|\log (t)|)\right)},
    \end{equation}
    and $\eta(u)$ is the inverse of $\varrho(u)=\sqrt{2f(u)}$.
\end{theorem}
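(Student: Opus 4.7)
The plan is to establish a sharp two-sided tail estimate for $\Xp(t)=B(\Sp(t))$ by combining the Gaussian tail of $B$ with a Chernoff-type tail for the inverse subordinator $\Sp$, and then to run the first and second Borel--Cantelli lemmas along a geometric sequence of times. The normalizing function $g$ from \eqref{normalizing_fun} is engineered precisely so as to balance these two tails: the Brownian LIL scale $\sqrt{2s\log|\log s|}$ at $s=\Sp(t)$ must be matched against the large-deviation scale of $\Sp(t)$ governed by $f$, which is exactly why $\eta$ is taken as the inverse of $\varrho(u)=\sqrt{2f(u)}$.

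For the upper bounds $\limsup\leq 1$ and $\liminf\geq -1$, I would first use that $\Sp$ is continuous and nondecreasing and independent of $B$ to write, via the reflection principle,
\[
    \mathbb{P}\Big(\sup_{s\leq t}|\Xp(s)|>y\Big)
    =\mathbb{P}\Big(\sup_{u\leq \Sp(t)}|B(u)|>y\Big)
    \leq 4\,\mathbb{E}\big[e^{-y^2/(2\Sp(t))}\big].
\]
Splitting the expectation at a level $A(t)$ yields
\[
    \mathbb{P}\Big(\sup_{s\leq t}|\Xp(s)|>y\Big)
    \leq 4 e^{-y^2/(2A(t))}+4\,\mathbb{P}(\Sp(t)>A(t)),
\]
and the second term is controlled via the duality $\mathbb{P}(\Sp(t)>x)=\mathbb{P}(\Tp(x)<t)\leq \inf_{u>0} e^{ut-xf(u)}$, optimized over $u$. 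Setting $y=(1+\delta)g(t)$ and tuning $A(t)$ according to the definition of $h$ in \eqref{normalizing_fun}, both terms collapse to $O(|\log t|^{-\gamma})$. Since $\gamma>1$, summability along $t_n=\theta^{-n}$ for part (i) or $t_n=\theta^n$ for part (ii), combined with the first Borel--Cantelli lemma and the regularity of $g$ used to interpolate between successive subsequence points, yields $\limsup \Xp(t)/g(t)\leq 1$ and, by symmetry, $\liminf \Xp(t)/g(t)\geq -1$.

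For the matching lower bounds I would condition on $\sigma(\Sp)$, which renders the increments $B(\Sp(t_n))-B(\Sp(t_{n-1}))$ conditionally independent centered Gaussians with variances $\Sp(t_n)-\Sp(t_{n-1})$. A \emph{lower}-tail Chernoff bound on these subordinator increments, together with the standard Gaussian lower tail, produces events $\{B(\Sp(t_n))-B(\Sp(t_{n-1}))>(1-\delta)g(t_n)\}$ whose conditional probabilities have divergent sum. The conditional second Borel--Cantelli lemma, integrated against the law of $\Sp$, then yields infinitely many such exceedances a.s.; combined with the already-established upper bound on $|\Xp(t_{n-1})|$, this gives $\limsup \Xp(t)/g(t)\geq 1$, and the $\liminf$ statement follows by symmetry. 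The main technical obstacle is precisely this lower tail for $\Sp(t_n)-\Sp(t_{n-1})$: the regularity hypotheses $f(u)\geq u^\epsilon$ for large $u$ in part (i) and $f(u)\leq u^\epsilon$ for small $u$ in part (ii) are exactly what is needed to prevent the inverse-subordinator increments from collapsing on the LIL scale, and carrying out the resulting Chernoff estimate in a form that plugs back cleanly into the definition of $g$ is where the interplay between $f$, $\eta$ and $g$ has to be handled with care.
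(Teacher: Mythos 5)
Your outline takes a direct Borel--Cantelli route, whereas the paper argues quite differently: it observes that $\Xp^*(t)=\sup_{s\le t}\Xp(s)=B^*(\Sp(t))$ is itself the generalized inverse of the subordinator $\Tp(S_{1/2}(\cdot))$, where $S_{1/2}$ is the first-passage process of $B$ (a $1/2$-stable subordinator), so that $\Tp\circ S_{1/2}$ has Laplace exponent $\varrho(u)=\sqrt{2f(u)}$. The Fristedt--Pruitt theorem on lower functions of subordinators then gives $\liminf_{t\to 0}\Tp(S_{1/2}(t))/h(t)=1$ with $h$ exactly as in \eqref{normalizing_fun}, and the identity $B^*(\Sp(\Tp(S_{1/2}(t))))=t$ converts this lower envelope into the upper envelope $\limsup \Xp^*(t)/g(t)=1$ with $g=h^{-1}$. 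This is why the hypotheses $f(u)\ge u^\epsilon$ (resp.\ $f(u)\le u^\epsilon$) appear: they verify the conditions of Fristedt--Pruitt for $\varrho=\sqrt{2f}$, not a Chernoff estimate for $\Sp$.

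The genuine gap in your proposal is the lower bound. The events $\{B(\Sp(t_n))-B(\Sp(t_{n-1}))>(1-\delta)g(t_n)\}$ have conditional probability roughly $\exp\{-(1-\delta)^2 g(t_n)^2/(2\Delta_n)\}$ with $\Delta_n=\Sp(t_n)-\Sp(t_{n-1})$, and for this sum to diverge you need $\Delta_n$ to be \emph{atypically large} infinitely often: already in the stable case $f(u)=u^\alpha$ the typical size of $\Sp(t)$ gives $g(t)^2/\Sp(t)\sim(\log|\log t|)^{2-\alpha}$, so the conditional probabilities computed at typical values of $\Sp$ are $o(|\log t_n|^{-\gamma})$ and the sum converges. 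The exceedances of $\Xp$ on the LIL scale therefore occur only on a joint large-deviation event for $\Sp$ and $B$, and your conditioning on $\sigma(\Sp)$ does not control the recurrence of the $\Sp$-part: the increments of the \emph{inverse} subordinator are not independent, so the second Borel--Cantelli lemma cannot be applied to them, and ``integrating against the law of $\Sp$'' does not repair this. To make a direct argument work you would have to transfer the independence from the increments of $\Tp$ (or of $\Tp\circ S_{1/2}$), which is essentially a re-derivation of the Fristedt--Pruitt divergence half; as written, the step producing ``infinitely many such exceedances a.s.'' is unsupported. The upper-bound half of your argument is plausible in outline, but you should also note that the constant $c$ and the precise form of $h$ have to come out of the optimization, which your sketch defers entirely to ``tuning $A(t)$.''
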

\begin{proof}
In the proof we will apply the method used in \cite{Bertoin3} in the context of iterated Brownian motion.

\bigskip\noindent
(i) Put
\[
    B^*(t)=\sup_{s\in [0,t]}B(s)
\]
and
\[
    \Xp^*(t)=\sup_{s\in [0,t]}\Xp(s)=B^*(\Sp(t)).
\]
Let us introduce
\[
S_{1/2}(t) {=}\inf\{\tau:B^*(\tau)>t\}
\]
It is known \cite{Sato} that $S_{1/2}(\tau)$ is the $1/2$-stable subordinator with Laplace transform
\[
    \mathbb{E}\left( e^{-u S_{1/2}(\tau)} \right ) = e^{-\tau \sqrt{2u}}.
\]
Additionally, we have
\[
    \Xp^*(t) {=} \inf\big\{\tau\,:\, \Tp(S_{1/2}(\tau))>t\big\}.
\]
The Laplace transform of the subordinator $\Tp(S_{1/2}(t))$ is given by
\[
    \mathbb{E}\left( e^{-u \Tp(S_{1/2}(t))} \right)
    =
    \mathbb{E}\left( e^{- S_{1/2}(t) f(u)} \right )=e^{-t\sqrt{2f(u)}}.
\]
Using the assumptions on the L\'evy exponent $f(u)$, we can apply \cite[Theorem 1]{Fristedt} to get
\[
    \liminf_{t\searrow 0}\frac{\Tp(S_{1/2}(t))}{h(t)}=1,
\]
where $h(t)$ is defined in \eqref{normalizing_fun}.

Since
\[
    B^*(\Sp(\Tp(S_{1/2}(t))))=t,
\]
we obtain that
$$
    \mathbb P\big(B^*(\Sp(c_0 h(t)))\geq t\text{\ \ infinitely often as\ \ } t\searrow 0\big)
    =
    \begin{cases}
    1,  &\text{if\ } c_0>1,\\
    0,  &\text{if\ } c_0<1.
    \end{cases}
$$
This in turn implies that
\[
    \limsup_{t\searrow 0}\frac{B^*(\Sp(t))}{g(t)}=1,
\]
where $g(t)$ is the inverse of $h(t)$. Clearly, $B^*$ in the numerator can be replaced by $B$, and we obtain
\[
    \limsup_{t\searrow 0}\frac{\Xp(t)}{g(t)}=1,
\]
which proves the first half of part (i).

The result
\[
    \liminf_{t\searrow 0}\frac{\Xp(t)}{g(t)}=-1,
\]
is a consequence of the symmetry of $B$.

\bigskip
The proof of part (ii) of the theorem is analogous.
\end{proof}

\begin{rem}
    In some particular cases it is possible to find the explicit form of the normalizing function $g$.
    For example in the $\alpha$-stable case $g(t)\sim c_0 t^{\alpha/2}(\log|\log t |)^{(2-\alpha)/2}$
as $t\rightarrow 0^+$ and $t\rightarrow \infty$, where $c_0$ is an appropriate positive constant, cf.~\cite{Nane,Magdziarz2}.
\end{rem}

\section{Ergodic properties}
Our standard reference for the ergodic theory of dynamical systems is \cite{Lasota}. For the convenience of our readers let us recall some basic facts.

Let  $(X, \mathcal{A}, \upsilon, S)$ be a measure-preserving dynamical system:
$X$ is the phase space, $\mathcal{A}$ is a $\sigma$-algebra on $X$, $\upsilon$ is a probability
measure on $X$, and  $S: X \rightarrow X$ is a measure-preserving transformation. A set $A \in\mathcal{A}$ is invariant if $S^{-1}(A) = A$.
We say that $(X, \mathcal{A}, \upsilon, S)$ is \emph{ergodic} if every invariant set $A\in\mathcal{A}$ is trivial, i.e.\ if either $\upsilon(A) = 0$ or $\upsilon(X\setminus A) = 0$.
We say that $(X, \mathcal{A}, \upsilon, S)$ is \emph{mixing}, if for all $A,B\in\mathcal{A}$
\[
    \lim_{n\rightarrow\infty}\upsilon(A\cap S^{-n}B)=\upsilon (A)\upsilon(B).
\]
Clearly, mixing is stronger than ergodicity.

Ergodicity and mixing have their origins in statistical physics.
Intuitively, a system is ergodic if the phase space $X$ cannot be divided into two regions such that a phase point
starting in one region will always stay in that region. Thus, for ergodic transformations
every phase point will visit the whole phase space. Another important property of ergodic systems
is that their temporal and ensemble averages coincide, which follows from Birkhoff's ergodic theorem, cf.\ \cite{Lasota}.
Mixing can be viewed as the asymptotic independence of the sets $A$ and $B$ under the transformation $S$.

In the context of stochastic processes, we use the following setup.
Consider a real-valued stationary process $Y(n)$, $n\in\mathbb{N}$. In its canonical representation,
$\{Y(n)\}$ can be identified with its law which is a probability measure $\mathbb{P}$ on
the space $\mathbb{R}^\mathbb{N}$. On this space we use
the canonical $\sigma$-algebra $\mathcal{B}$ generated by the cylinder sets,
and we consider the standard shift transformation $S: \mathbb{R}^\mathbb{N} \rightarrow \mathbb{R}^\mathbb{N}$.
Stationarity of $\{Y(n)\}$ implies that the shift $S$ is measure-preserving.
Note that $(\mathbb{R}^\mathbb{N},\mathcal{B},\mathbb{P},S)$ is a typical object of study
in the theory of dynamical systems. Therefore, ergodic properties of stationary stochastic processes
can be studied in the framework of the theory of dynamical systems.
Detailed analysis of ergodicity and mixing for the classes of infinitely divisible and fractional processes,
can be found in \cite{Janicki_Weron,Cambanis,Cambanis2,Rosinski_Zak} and \cite{Magdziarz4}, respectively.

Here we will study ergodic properties of the anomalous diffusion process $\Xp$.
However, one cannot verify ergodicity and mixing of $\Xp$ in a straightforward manner,
since neither $\Xp$ nor its increments are stationary.
Therefore, we will introduce a modification.

Let us assume that the first moment of $\Tp$ is finite: $\mu:= \mathbb{E}(\Tp(1))<\infty$.
Let
\[
    \tilde T^f(t)=T_0 + \Tp(t)
\]
denote the generalized subordinator whose initial distribution is given by
\[
    \mathbb{P}(T_0\leq x)=\frac{1}{\mu} \int_0^x \int_y^\infty \nu(ds)\,dy
\]
($T_0$ is assumed independent of $\Tp$). The corresponding inverse subordinator is
\begin{equation}\label{stationary_subordinator}
    \tSp(t)=\inf\big\{\tau>0\,:\, \tilde T^f(\tau)>t \big\}.
\end{equation}
With this choice of the initial distribution of $T_0$,
the process $\tSp$ has stationary increments, cf.\ \cite{Lageras}.
Consequently, the modification of $\Xp$ defined as
\[
    \tXp (t)=B(\tSp(t)),
\]
has stationary increments as well.
We denote this stationary sequence by
\begin{equation}\label{stationary_increments}
    \tYp(n)=\tXp(n+1)-\tXp(n),\quad n\in\mathbb{N}.
\end{equation}
Now we are in position to verify its ergodicity and mixing
\begin{theorem}
\label{Th_mixing}
The stationary sequence $\tYp$ defined in \eqref{stationary_increments} is ergodic and mixing.
\end{theorem}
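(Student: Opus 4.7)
Since mixing implies ergodicity, the plan is to concentrate on proving the mixing property. The argument splits into two steps: first, reduce mixing of $\tYp$ to mixing of the increment sequence $\Delta_n := \tSp(n+1) - \tSp(n)$ of the stationary inverse subordinator; second, establish mixing of $(\Delta_n)$ by exploiting the independent-increments structure of $\tilde T^f$ together with the strong Markov property at first-passage times.

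For the reduction, use that $B$ and $\tSp$ are independent and that $B$ has independent increments on disjoint intervals. Conditional on $\tSp$, the random variables $\tYp(n) = B(\tSp(n+1)) - B(\tSp(n))$ are independent centred Gaussians with conditional variance $\Delta_n$. Since $\tSp$ is non-decreasing, for $n \geq k$ the intervals $[\tSp(j),\tSp(j+1)]$, $j=1,\dots,k$, are disjoint from $[\tSp(n+j),\tSp(n+j+1)]$, $j=1,\dots,k$, so that for any bounded measurable $\phi,\psi:\mathbb{R}^k\to\mathbb{R}$,
\[
\mathbb{E}\bigl[\phi(\tYp(1),\ldots,\tYp(k))\,\psi(\tYp(n+1),\ldots,\tYp(n+k))\bigr]
=\mathbb{E}\bigl[F(\Delta_1,\ldots,\Delta_k)\,G(\Delta_{n+1},\ldots,\Delta_{n+k})\bigr],
\]
where $F(d_1,\ldots,d_k):=\mathbb{E}\bigl[\phi(\sqrt{d_1}Z_1,\ldots,\sqrt{d_k}Z_k)\bigr]$ for independent standard Gaussians $(Z_j)$, and $G$ is defined analogously from $\psi$. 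Since $F$ and $G$ are bounded measurable, a monotone class argument on the cylindrical $\sigma$-algebra shows that mixing of the stationary sequence $(\Delta_n)$ implies mixing of $\tYp$.

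For mixing of $(\Delta_n)$, I would invoke the strong Markov property of $\tilde T^f$ at the first-passage time $\tSp(n)$: the post-stopping-time process $\tilde T^f(\tSp(n)+\cdot) - \tilde T^f(\tSp(n))$ is an independent copy of $\Tp$, and the future block $(\Delta_{n+1},\ldots,\Delta_{n+k})$ is a measurable function of this process together with the overshoot $O_n := \tilde T^f(\tSp(n)) - n$. The Palm-type choice of $T_0$ in \eqref{stationary_subordinator} is precisely designed to make $O_n$ stationary in $n$, cf.\ \cite{Lageras}, so the law of the future block is independent of $n$. Asymptotic independence of the future block from the past block $(\Delta_1,\dots,\Delta_k)$ then yields mixing.

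The main obstacle is exactly this last point: the overshoot $O_n$ is itself a functional of the past of $\tilde T^f$, so the strong Markov property at $\tSp(n)$ alone does not decouple the two blocks. I would handle this via a truncation argument. Fix a large deterministic $M$ and split $\tilde T^f$ into the independent pieces $\tilde T^f|_{[0,M]}$ and $\tilde T^f(M+\cdot) - \tilde T^f(M)$, independent by Lévy's independent increments. For $M$ large, the past block is, outside a set of small probability, a functional only of the first piece; for $n$ large, the event $\{\tSp(n) > M\}$ has probability close to one, so the future block becomes a functional of the second piece together with an overshoot whose distribution is already the stationary one. Taking $n \to \infty$ first and then $M \to \infty$ yields the factorisation $\lim_n \mathbb{E}[F \cdot G\circ S^n] = \mathbb{E}[F]\,\mathbb{E}[G]$, which, combined with the reduction step, gives the claimed mixing of $\tYp$.
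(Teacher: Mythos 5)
Your overall architecture is sound and your second step takes a genuinely different route from the paper. The reduction step is essentially the paper's: the paper also conditions on $\tSp$ to turn the increments of $\tXp$ into conditionally independent Gaussians, reducing everything (via Maruyama's characteristic-function criterion for mixing, \cite{Maruyama}) to an asymptotic factorisation of the joint Laplace transform of the increments $\Delta_n=\tSp(n+1)-\tSp(n)$; your version with general bounded $\phi,\psi$ and a monotone class argument is equivalent. Where you diverge is in proving that factorisation. The paper proceeds analytically: it uses the recursive relations \eqref{recurrence}--\eqref{recurrence2} of Kaj and Martin-L\"of for joint exponential moments of $\Sp$ and $\tSp$, and runs an induction on the number of time points (Lemma \ref{lemma2} and the proof of the theorem), with the renewal-theoretic input isolated in Lemma \ref{lemma1} and established there by the method of moments using the formulas of \cite{Lageras}. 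Your path-splitting argument --- cut $\tilde T^f$ at a deterministic operational time $M$, note that the past block is $\sigma(\tilde T^f|_{[0,M]})$-measurable up to an event of small probability, and let the conditional law of the future block relax to the stationary one as $n\to\infty$ --- replaces that entire induction with a single conditioning step, and in my view is the more transparent probabilistic explanation of why mixing holds.

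There is, however, one genuine soft spot, and it sits exactly where the real analytic work of the paper lives. After the split at time $M$, the relevant process is $\tilde T^f(M)+\big(\tilde T^f(M+\cdot)-\tilde T^f(M)\big)$, whose ``delay'' $\tilde T^f(M)$ is \emph{not} the Palm distribution of $T_0$ in \eqref{stationary_subordinator}; so your assertion that the future block is governed by ``an overshoot whose distribution is already the stationary one'' is not correct as stated. What you actually need is that, for each fixed $y=\tilde T^f(M)$, the increments of the \emph{non-delayed} inverse subordinator $\Sp$ at levels $n+z_j-y$ converge in distribution, as $n\to\infty$, to the stationary increments of $\tSp$ --- i.e.\ precisely the content of Lemma \ref{lemma1}, which the paper proves via the renewal theorem $U(t+h)-U(t)\to h/\mu$, the moment formulas of \cite{Lageras}, and moment-determinacy (available because $\Sp(t)$ has exponential moments). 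With that lemma supplied, your dominated-convergence passage ($n\to\infty$ first, then $M\to\infty$) goes through and yields a complete proof; without it, the decisive convergence is asserted on the basis of a stationarity claim that fails in the truncated picture.
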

Before we embark on the proof of the above theorem, we need two lemmas.
The first shows that the increments of $\Sp$ are asymptotically stationary.
More precisely, the increments of $\Sp$ converge in distribution to the increments of $\tSp$.
\begin{lemma}\label{lemma1}
Let $M\in\mathbb{N}$, $z_j\in\mathbb{R}_+$, $b_j\in \mathbb{R}_+$, $j=1,\ldots,M$, with $z_i<z_j$ for $i<j$. Then
\begin{align*}
    \mathbb{E}\bigg( \exp &\bigg \{ -\sum_{j=1}^M b_j \big(\Sp(z_j+1+n)-\Sp(z_j+n)  \big) \bigg \}  \bigg) \\
    &\xrightarrow[n\to\infty]{} \mathbb{E}\bigg( \exp \bigg \{ -\sum_{j=1}^M b_j \big(\tSp(z_j+1)-\tSp(z_j)  \big) \bigg \}  \bigg)
\end{align*}
\end{lemma}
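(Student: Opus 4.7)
The plan is to write both sides of the claim as $\mathbb{E}\,\Psi(X)$ for a single bounded continuous function $\Psi$, where $X=O_n$ on the left and $X=T_0$ on the right, and then invoke the classical fact that the overshoot $O_n := \Tp(\Sp(n))-n$ converges in distribution to $T_0$. The first step is pathwise: $\Sp(n)$ is a stopping time for $\Tp$, so by the strong Markov property there exists a copy $\Tp'$ of $\Tp$ with right-continuous inverse $\Sp'$, independent of $\mathcal{F}_{\Sp(n)}$, such that $\Tp(\Sp(n)+u)=\Tp(\Sp(n))+\Tp'(u)$. A short computation then shows that, for every $s\geq n$,
\begin{equation*}
    \Sp(s)-\Sp(n) \;=\; \Sp'\bigl((s-n-O_n)^+\bigr),
\end{equation*}
with the case $O_n>s-n$ absorbed into the convention $\Sp'(0)=0$. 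Consequently the left-hand side of the lemma equals $\mathbb{E}\,\Psi(O_n)$ with
\begin{equation*}
    \Psi(o) \;:=\; \mathbb{E}\exp\Bigl\{-\sum_{j=1}^M b_j\bigl[\Sp'((z_j+1-o)^+)-\Sp'((z_j-o)^+)\bigr]\Bigr\}.
\end{equation*}

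By the very choice of the initial distribution of $T_0$, the overshoot of $\tilde T^f$ above level $0$ is exactly $T_0$, and the same strong Markov argument gives $\tSp(t)=\Sp'((t-T_0)^+)$ for a copy of $\Sp$ independent of $T_0$. Hence the right-hand side of the lemma equals $\mathbb{E}\,\Psi(T_0)$ with the \emph{same} functional $\Psi$, and the claim reduces to
\begin{equation*}
    \mathbb{E}\,\Psi(O_n) \;\xrightarrow[n\to\infty]{}\; \mathbb{E}\,\Psi(T_0).
\end{equation*}

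To conclude I would invoke (a) the renewal-theoretic convergence $O_n\Rightarrow T_0$, which holds because $\mu=\mathbb{E}\Tp(1)<\infty$ is assumed throughout Section~3 and the stationary overshoot distribution of a drift-free subordinator has density $\nu(x,\infty)/\mu$, which by the definition of $T_0$ is exactly its law; and (b) continuity of $\Psi$: since $\nu(0,\infty)=\infty$ the paths of $\Sp'$ are a.s.\ continuous, so $o\mapsto \Sp'((z-o)^+)$ is a.s.\ continuous, the exponential is bounded by $1$, and dominated convergence makes $\Psi$ a bounded continuous function on $[0,\infty)$. Combined with the portmanteau theorem, (a) and (b) finish the proof. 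The technical crux is (a)---the weak convergence of the overshoot of a subordinator to its equilibrium law---which is classical and follows from Bertoin's renewal theorem for subordinators (see \cite{Bertoin}; cf.\ also \cite{Lageras}); the pathwise decomposition and the continuity of $\Psi$ are routine by comparison.
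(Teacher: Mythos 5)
Your argument is correct, but it takes a genuinely different route from the paper's. The paper proves the lemma by the method of moments: since $\Sp(t)$ has exponential moments (via \eqref{random-laplace}), the increments are moment-determinate, and convergence of all joint moments is obtained from Lager{\aa}s's renewal-process-type formula \cite{Lageras} for $\mathbb{E}\big[\prod_j(\Sp(z_j+1+n)-\Sp(z_j+n))^{k_j}\big]$ combined with Blackwell's renewal theorem for $U(t)=\mathbb{E}(\Sp(t))$, i.e.\ the weak convergence of $U(\cdot+n)-U(n)$ to Lebesgue measure scaled by $1/\mu$. You instead split the path at the stopping time $\Sp(n)$, express all post-$n$ increments through a fresh copy $\Sp'$ shifted by the overshoot $O_n=\Tp(\Sp(n))-n$, and reduce the lemma to the level-passage renewal statement $O_n\Rightarrow T_0$ together with boundedness and continuity of $\Psi$; both of your supporting facts are available here, since $\mu<\infty$ is a standing assumption of Section 3, the law of $T_0$ is by construction exactly the stationary overshoot density $\mu^{-1}\nu(x,\infty)\,dx$, continuity of $\Psi$ follows from the a.s.\ continuity of $\Sp'$, and the overshoot convergence is the content of \cite{Bertoin2}. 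So both proofs ultimately rest on a renewal theorem for the subordinator, applied in different forms. What your approach buys is uniformity in $M$ (the paper treats $M=1$ and then must redo the argument via a multinomial expansion and the multivariate moment formula) and a structural explanation of the limit: the delayed process $\tSp$ is precisely the stationary version reached when the overshoot equilibrates. What the paper's approach buys is that the moment and Laplace-functional identities of \cite{Lageras} and \cite{Kaj} are reused verbatim in Lemma \ref{lemma2} and Theorem \ref{Th_mixing}, so the moment route keeps the whole section in one toolkit. The only point you should make explicit is the non-arithmeticity needed for $O_n\Rightarrow T_0$, which follows from the standing assumption $\nu(0,\infty)=\infty$.
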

\begin{proof}
Assume that $M=1$. We will show that
\begin{equation}\label{conv_inc}\begin{aligned}
    \mathbb{E}\bigg( \exp &\bigg \{ - b_1 \big(\Sp(z_1+1+n)-\Sp(z_1+n)  \big) \bigg \}  \bigg)\\
    &\xrightarrow[n\to\infty]{} \mathbb{E}\bigg( \exp \bigg \{ - b_1 \big(\tSp(z_1+1)-\tSp(z_1)  \big) \bigg \}  \bigg).
\end{aligned}\end{equation}Observe that $\tSp(t)\leq \Sp(t)$ and that, by \eqref{random-laplace} and the continuity of $t\mapsto \Sp(t)$, $\Sp(t)$ has exponential moments for any $t>0$. Therefore, the moments determine the distribution
of $\tSp(t)$, cf.\ \cite[Chapter VII, Section 3]{Feller}.
Consequently, the distribution of the random variable $\tSp(z_1+1)-\tSp(z_1)$
is also determined by its moments. This means that it is enough to prove the convergence of moments
\begin{equation}\label{conv_moments}
    \mathbb{E}\left[\big(\Sp(z_1+1+n)-\Sp(z_1+n)\big)^k\right]
    \xrightarrow[n\to\infty]{} \mathbb{E}\left[\big(\tSp(z_1+1)-\tSp(z_1)\big)^k\right]
\end{equation}
for every $k\in\mathbb{N}$.

Denote by $U(t)=\mathbb{E}(\Sp(t))$ the renewal function of $\Sp$ and by $U(dx)$ the measure induced by $U$. We introduce another measure on $[z_1,z_1+1]$ defined as
\[
    U_n[z_1,x]=U(x+n)-U(n),\;\;\; x\in [z_1,z_1+1].
\]
Applying the renewal theorem we get that

\[
    U_n[z_1,x]\xrightarrow[n\to\infty]{} \frac{x}{\mu} \;\;\;\;\;\; \text{pointwise},
\]
where $\mu=\mathbb{E}(\Tp(1))$. This implies that
\[
    U_n \xrightarrow[n\to\infty]{} \frac{\lambda}{\mu} \;\;\;\;\;\; \text{weakly},
\]
where $\lambda$ is Lebesgue measure on $[z_1,z_1+1]$.

Now, using \cite[Theorem 1]{Lageras}, we obtain
\begin{align*}
    \mathbb{E}\Big[\big(&\Sp(z_1+1+n)-\Sp(z_1+n)\big)^k\Big] \\
    &= k! \int\limits_{z_1+n}^{z_1+1+n}\int\limits_{x_1}^{z_1+1+n} \ldots \int\limits_{x_{k-1}}^{z_1+1+n} U(dx_k-x_{k-1})\ldots U(dx_2-x_{1})U(dx_1) \\
    &= k! \int\limits_{z_1}^{z_1+1}\int\limits_{y_1}^{z_1+1} \ldots \int\limits_{y_{k-1}}^{z_1+1} U(dy_k-y_{k-1})\ldots U(dy_2-y_{1})U_n(dy_1),
\end{align*}
where the last equality was obtained by changing the variables $y_i=x_i-n$, $i=1,\ldots,k$. By the weak convergence of $U_n$ to $\lambda/\mu$ and \cite[Theorem 1]{Lageras}, we get
\begin{align*}
    \mathbb{E}\Big[\big(&\Sp(z_1+1+n)-\Sp(z_1+n)\big)^k\Big]\\
    &\xrightarrow[n\to\infty]{} k! \int\limits_{z_1}^{z_1+1}\int\limits_{y_1}^{z_1+1} \ldots \int\limits_{y_{k-1}}^{z_1+1} U(dy_k-y_{k-1})\ldots U(dy_2-y_{1})\frac{dy_1}{\mu}\\
    &= \mathbb{E}\Big[\big(\tSp(z_1+1)-\tSp(z_1)\big)^k\Big].
\end{align*}
This ends the proof for $M=1$.

The proof for arbitrary $M\in \mathbb{N}$ is similar. Note first that the random variable
\[
    \sum_{j=1}^M b_j \big(\tSp(z_j+1)-\tSp(z_j)  \big)
\]
is determined by its moments. So, it is enough to show
\begin{equation*}
    \mathbb{E}\Bigg[\bigg( \sum_{j=1}^M b_j \big(\Sp(z_j+1+n)-\Sp(z_j+n)  \big) \bigg)^k\Bigg] \xrightarrow[n\to\infty]{}
    \mathbb{E}\Bigg[\bigg( \sum_{j=1}^M b_j \big(\tSp(z_j+1)-\tSp(z_j)  \big) \bigg)^k\Bigg]
\end{equation*}
for any $k\in\mathbb{N}$. As before, using the multinomial formula
\[
    \bigg( \sum_{j=1}^M  x_j \bigg)^k  = \sum_{\begin{subarray}{c} k_1,...,k_M \geq 0\\ k_1+\cdots+k_M=k\end{subarray}}
    \frac{k!}{k_1 !\ldots k_M !} x_1 ^{k_1}\ldots x_M^{k_M},
\]
one needs to show the convergence
\begin{equation}\label{conv_moments3}
    \mathbb{E}\Bigg[ \prod_{j=1}^M \big(\Sp(z_j+1+n)-\Sp(z_j+n)  \big)^{k_j} \Bigg]
    \xrightarrow[n\to\infty]{} \mathbb{E}\Bigg[ \prod_{j=1}^M  \big(\tSp(z_j+1)-\tSp(z_j)  \big)^{k_j} \Bigg]
\end{equation}
with $\sum{k_j}=k$. Applying \cite[Theorem 1]{Lageras}, we obtain
\begin{equation*}
    \mathbb{E}\Bigg( \prod_{j=1}^M \big(\Sp(z_j+1+n)-\Sp(z_j+n)  \big)^{k_j} \Bigg)
    =
    \prod_{i=1}^M k_i ! \prod_{j=1}^k \int_C U(dx_j-x_{j-1}),
\end{equation*}
where $C=\{(x_0,\ldots,x_k) \,:\, x_0=0, z_j+n < x_{k_0+\ldots+k_{j-1}+1}<\ldots<x_{k_0+\ldots+k_{j}}\leq z_j+1+n, j=1,\ldots,M,k_0=0  \}$.
Finally, changing the variables $y_i=x_i-n$, $i=1,\ldots,k$ and
using the weak convergence of $U_n$ to $\lambda/\mu$, we get \eqref{conv_moments3}.
\end{proof}

We will use the following recursive relations for inverse subordinators from \cite{Kaj}:
\begin{align}\label{recurrence}
    &\mathbb{E}\bigg( \exp\bigg\{ \sum_{j=1}^M \theta_j \Sp(\tau_j) \bigg\} \bigg)
    =\mathbb{E}\bigg( \exp\bigg\{ \sum_{j=2}^M \theta_j \Sp(\tau_j) \bigg\} \bigg) + \\ \notag
    &\quad\mbox{}+ \frac{\theta_1}{\sum_{j=1}^M\theta_j}\int_0^{\tau_1} \mathbb{E}\bigg( \exp\bigg\{ \sum_{j=2}^M \theta_j \Sp(\tau_j-x) \bigg\} \bigg) d_x\mathbb{E}\bigg( \exp \bigg\{ \Sp(x) \sum_{j=1}^M\theta_j  \bigg\} \bigg),\\
\label{recurrence2}
    &\mathbb{E}\bigg( \exp\bigg\{ \sum_{j=1}^M \theta_j \tSp(\tau_j) \bigg\} \bigg)
    = \mathbb{E}\bigg( \exp\bigg\{ \sum_{j=2}^M \theta_j \tSp(\tau_j) \bigg\} \bigg)+ \\ \nonumber
    &\quad\mbox{}+ \frac{\theta_1}{\sum_{j=1}^M\theta_j}\int_0^{\tau_1} \mathbb{E}\bigg( \exp\bigg\{ \sum_{j=2}^M \theta_j \Sp(\tau_j-x) \bigg\} \bigg) d_x\mathbb{E}\bigg( \exp \bigg\{ \tSp(x) \sum_{j=1}^M\theta_j  \bigg\} \bigg),
\end{align}
where $M\geq 2$, $0\leq\tau_1\leq...\leq \tau_M$,
$\theta_1,...,\theta_M\in\mathbb{R}$.

We will also need the following technical result
\begin{lemma}
\label{lemma2}
Let $m,M\in\mathbb{N}$ with $1\leq m \leq M$, $0\leq z_1\leq \ldots \leq z_M$, $b_j\in \mathbb{R}$ for $j=1,\ldots,m$ and
$b_j\in \mathbb{R}_+$ for $j=m+1,\ldots,M$. Then
\begin{align}\label{lemma2_formula}
    &\mathbb{E}\bigg(\exp \bigg \{ \sum_{j=1}^m b_j \Sp(z_j) -\sum_{j=m+1}^M b_j \big(\Sp(z_j+1+n)-\Sp(z_j+n)  \big) \bigg \}  \bigg) \\ &\notag\xrightarrow[n\to\infty]{}\mathbb{E}\bigg( \exp \bigg \{ \sum_{j=1}^m b_j \Sp(z_j)  \bigg \}  \bigg) \mathbb{E}\bigg( \exp \bigg \{ -\sum_{j=m+1}^M b_j \big(\tSp(z_j+1)-\tSp(z_j)  \big) \bigg \}  \bigg).
\end{align}
\end{lemma}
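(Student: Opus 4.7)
The plan is an induction on $m$, using the recurrence \eqref{recurrence} as the algebraic tool that separates the fixed-time contribution from the shifted-increment one, and Lemma \ref{lemma1} both as the base case and as the source of the $\tSp$-limit at each stage.

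For the base case $m=0$ the first sum in the exponent on the LHS of \eqref{lemma2_formula} is empty and the first factor on the RHS equals $1$, so the statement is precisely Lemma \ref{lemma1}.

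For the inductive step assume the claim for $m-1$ and write the exponent in the LHS in the form $\sum_j\theta_j\Sp(\tau_j)$, taking $\tau_1=z_1$, $\theta_1=b_1$, and the remaining entries to be $z_2,\dots,z_m$ (with coefficients $b_2,\dots,b_m$) followed by the endpoints $z_j+n$ and $z_j+1+n$ for $j=m+1,\dots,M$ (with coefficients $+b_j$ and $-b_j$ respectively), reordered if necessary. A crucial simplification is that
\[
    \Theta:=\sum_j\theta_j=\sum_{j=1}^m b_j,
\]
because the $\pm b_j$ pairs coming from the shifted increments cancel. Applying \eqref{recurrence} to peel off $\Sp(z_1)$ yields $L_n=L_n^{(1)}+L_n^{(2)}$, where $L_n^{(1)}$ is the same joint Laplace transform with the $\Sp(z_1)$-contribution removed and
\[
    L_n^{(2)}=\frac{b_1}{\Theta}\int_0^{z_1} F_n(x)\,d_x\mathbb{E}\!\left(e^{\Theta\Sp(x)}\right),
\]
with $F_n(x)$ the analogous joint transform in which every remaining $\tau_j$ is replaced by $\tau_j-x$. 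The induction hypothesis applied to $L_n^{(1)}$, and pointwise in $x\in[0,z_1]$ to $F_n(x)$ (with ``$n$'' replaced by the still-divergent $n-x$), yields both limits, each of the form $\mathbb{E}(\exp\{-\sum_{j=m+1}^M b_j(\tSp(z_j+1)-\tSp(z_j))\})$ times a Laplace transform of $\Sp$ at the fixed times $z_j$, respectively $z_j-x$; the stationarity of the increments of $\tSp$ absorbs the $-x$ shift in the second factor, so this common factor comes out of the integral. Summing the two limits and recognising the bracket as the right-hand side of \eqref{recurrence} applied in reverse to $\Sp$ at $z_1,\dots,z_m$ with coefficients $b_1,\dots,b_m$ delivers the factorisation \eqref{lemma2_formula}.

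The delicate step, and the main obstacle, is passing to the limit under the integral defining $L_n^{(2)}$. I would justify it by dominated convergence: since $b_j\geq 0$ for $j\geq m+1$ and $\Sp$ is non-decreasing, the shifted-increment factor inside $F_n(x)$ is bounded by $1$, so $|F_n(x)|\leq\mathbb{E}(\exp\{\sum_{j=2}^m b_j\Sp(z_j-x)\})$, and this bound is integrable against the finite Stieltjes measure $d_x\mathbb{E}(\exp\{\Theta\Sp(x)\})$ on $[0,z_1]$. The finiteness of the exponential moments needed here is of the type already exploited in the proof of Theorem \ref{thm-1} via \eqref{random-laplace}.
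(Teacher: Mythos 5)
Your proof is correct and follows essentially the same route as the paper's: peel off the $\Sp(z_1)$ term with the recurrence \eqref{recurrence} (noting $\sum_j\theta_j=\sum_{j=1}^m b_j$), pass to the limit in both resulting terms by the induction hypothesis and dominated convergence, and reassemble via \eqref{recurrence}; the paper inducts on $M$ with the $m=1$ case handled by Lemma \ref{lemma1}, which is the same chain as your induction on $m$ with base $m=0$. Your explicit domination bound (the increment factor is at most $1$) and the use of stationarity of $\tSp$'s increments to pull the second factor out of the integral are exactly the points the paper leaves implicit.
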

\begin{proof}
We will prove formula \eqref{lemma2_formula} by induction on the parameter $M$.

\noindent
Step I. Let $M=1$. Then $m=1$ and the formula is trivially fulfilled.

\noindent
Step II. Assume that the formula \eqref{lemma2_formula} holds for $M-1$. We will
show that it is also true for $M$. Applying \eqref{recurrence} we have
\begin{align*}
    &\mathbb{E}\bigg( \exp \bigg \{ \sum_{j=1}^m b_j \Sp(z_j) -\sum_{j=m+1}^M b_j \big(\Sp(z_j+1+n)-\Sp(z_j+n)  \big) \bigg \}  \bigg)\\
    &=\mathbb{E}\bigg( \exp \bigg \{ \sum_{j=2}^m b_j \Sp(z_j) -\sum_{j=m+1}^M b_j \big(\Sp(z_j+1+n)-\Sp(z_j+n)  \big) \bigg \}  \bigg)\\
    &\quad\mbox{}+ \frac{b_1}{\sum_{j=1}^m b_j} \int_0^{z_1} \mathbb{E}\bigg( \exp\bigg\{ \sum_{j=2}^m b_j \Sp(z_j-x)-\mbox{}\\
    &\quad\mbox{}-\sum_{j=m+1}^M b_j \big(\Sp(z_j+1+n-x)-\Sp(z_j+n-x) \big) \bigg\} \bigg) d_x\mathbb{E}\bigg( \exp \bigg\{ \Sp(x) \sum_{j=1}^m b_j  \bigg\} \bigg).
\end{align*}
Now, if $m>1$ we apply the induction assumption, the dominated convergence theorem to
get that the expression above converges to
\begin{gather*}
    \mathbb{E}\bigg( \exp \bigg \{ \sum_{j=2}^m b_j \Sp(z_j)\bigg \}  \bigg) \mathbb{E}\bigg( \exp \bigg \{  -\sum_{j=m+1}^M b_j \big(\tSp(z_j+1)-\tSp(z_j)  \big) \bigg \}  \bigg) + \\
    \mbox{}+ \mathbb{E}\bigg( \exp \bigg \{  -\sum_{j=m+1}^M b_j \big(\tSp(z_j+1)-\tSp(z_j)  \big) \bigg \}  \bigg)\times \\
    \mbox{}\times\frac{b_1}{\sum_{j=1}^m b_j} \int_0^{z_1} \mathbb{E}\bigg( \exp\bigg\{ \sum_{j=2}^m b_j \Sp(z_j-x) \bigg\} \bigg) d_x\mathbb{E}\bigg( \exp \bigg\{ \Sp(x) \sum_{j=1}^m b_j  \bigg\} \bigg)
\end{gather*}
and this is, by \eqref{recurrence}, equal to
$$
    \mathbb{E}\bigg( \exp \bigg \{ \sum_{j=1}^m b_j \Sp(z_j)  \bigg \}  \bigg) \mathbb{E}\bigg( \exp \bigg \{ -\sum_{j=m+1}^M b_j \big(\tSp(z_j+1)-\tSp(z_j)  \big) \bigg \}  \bigg).
$$
For $m=1$ we apply Lemma \ref{lemma1} and use the same argumentation. This ends the proof.
\end{proof}

Now we are ready to prove our main result.
\begin{proof}[Proof of Theorem \ref{Th_mixing}.]
In order to prove that $\tYp(n)$ defined in \eqref{stationary_increments} is mixing it is sufficient to
show the following convergence of characteristic functions, cf.\ \cite{Maruyama},
\begin{equation*}\begin{aligned}
    \mathbb{E}\bigg( \exp &\bigg( i\sum_{j=1}^m a_j \tYp({z_j})+ i\sum_{j=m+1}^M a_j \tYp({z_j+n}) \bigg)  \bigg ) \\ &\xrightarrow[n\to\infty]{} \mathbb{E}\bigg( \exp \bigg( i\sum_{j=1}^m a_j \tYp({z_j}) \bigg)  \bigg) \mathbb{E}\bigg( \exp \bigg( i\sum_{k=m+1}^M a_j \tYp({z_j}) \bigg)  \bigg),
\end{aligned}\end{equation*}
for $m,M\in\mathbb{N}$, $1\leq m \leq M$, $a_1,...,a_M \in \mathbb{R}$ and $0\leq z_1\leq...\leq z_M$. By the
definition of $\tYp(t)$ we get that the above
convergence is equivalent to
\begin{equation*}\begin{aligned}
    \mathbb{E}\bigg(\exp &\bigg \{ -\sum_{j=1}^m c_j \big( \tSp(z_j+1)-\tSp(z_j)\big) -\sum_{j=m+1}^M c_j \big(\tSp(z_j+1+n)-\tSp(z_j+n)\big) \bigg \} \bigg) \\
    &\xrightarrow[n\to\infty]{} \mathbb{E}\bigg( \exp \bigg \{ -\sum_{j=1}^m c_j\big( \tSp(z_j+1)-\tSp(z_j)\big)  \bigg \}  \bigg)\times\mbox{}\\ &\phantom{\xrightarrow[n\to\infty]{}}\quad\mbox{}\times \mathbb{E}\bigg( \exp \bigg \{ -\sum_{j=m+1}^M c_j \big(\tSp(z_j+1)-\tSp(z_j)  \big) \bigg \}  \bigg),
\end{aligned}\end{equation*}
where $c_j=a_j^2/2$. We will prove the following more general assertion:
\begin{align}\label{mixing_formula}
    \mathbb{E}\bigg( &\exp \bigg \{ \sum_{j=1}^m b_j \tSp(z_j) -\sum_{j=m+1}^M b_j \big(\tSp(z_j+1+n)-\tSp(z_j+n)  \big) \bigg \}  \bigg) \\ &\notag\xrightarrow[n\to\infty]{} \mathbb{E}\bigg( \exp \bigg \{ \sum_{j=1}^m b_j \tSp(z_j)  \bigg \}  \bigg) \mathbb{E}\bigg( \exp \bigg \{ -\sum_{j=m+1}^M b_j \big(\tSp(z_j+1)-\tSp(z_j)  \big) \bigg \}  \bigg),
\end{align}
where $m,M\in\mathbb{N}$ with $1\leq m \leq M$, $0\leq z_1\leq \ldots \leq z_M$, $b_j\in \mathbb{R}$ for $j=1,\ldots,m$ and
$b_j\in \mathbb{R}_+$ for $j=m+1,\ldots,M$.

The proof follows the same lines as Lemma \ref{lemma2}. To show \eqref{mixing_formula} we will use induction on the parameter $M$:

\noindent
Step I. For $M=1$ also $m=1$ and the convergence trivially holds.

\noindent
Step II. Assume that the formula \eqref{mixing_formula} holds for $M-1$. We will
show that it holds also for $M$. We apply \eqref{recurrence2} to get
\begin{align*}
    \mathbb{E}&\bigg( \exp \bigg \{ \sum_{j=1}^m b_j \tSp(z_j) -\sum_{j=m+1}^M b_j \big(\tSp(z_j+1+n)-\tSp(z_j+n)  \big) \bigg \}  \bigg)\\
    &=\mathbb{E}\bigg( \exp \bigg \{ \sum_{j=2}^m b_j \tSp(z_j) -\sum_{j=m+1}^M b_j \big(\tSp(z_j+1+n)-\tSp(z_j+n)  \big) \bigg \}  \bigg)\\
    &\quad\mbox{}+ \frac{b_1}{\sum_{j=1}^m b_j} \int_0^{z_1} \mathbb{E}\bigg( \exp\bigg\{ \sum_{j=2}^m b_j \Sp(z_j-x) \\
    &\quad\mbox{}-\sum_{j=m+1}^M b_j \big(\Sp(z_j+1+n-x)-\Sp(z_j+n-x) \big) \bigg\} \bigg) d_x\mathbb{E}\bigg( \exp \bigg\{ \tilde \Sp(x) \sum_{j=1}^m b_j  \bigg\} \bigg)
\end{align*}
Now, applying the induction assumption for the first summand, Lemma \ref{lemma2} for the expression inside the integral,
and the dominated convergence theorem we obtain that the above formula converges to
\begin{gather*}
    \mathbb{E}\bigg( \exp \bigg \{ \sum_{j=2}^m b_j \tSp(z_j)\bigg \}  \bigg) \mathbb{E}\bigg( \exp \bigg \{  -\sum_{j=m+1}^M b_j \big(\tSp(z_j+1)-\tSp(z_j)  \big) \bigg \}  \bigg) + \\
    \mbox{}+ \mathbb{E}\bigg( \exp \bigg \{  -\sum_{j=m+1}^M b_j \big(\tSp(z_j+1)-\tSp(z_j)  \big) \bigg \}  \bigg)\times \\
    \mbox{}\times\frac{b_1}{\sum_{j=1}^m b_j} \int_0^{z_1} \mathbb{E}\bigg( \exp\bigg\{ \sum_{j=2}^m b_j \Sp(z_j-x) \bigg\} \bigg) d_x\mathbb{E}\bigg( \exp \bigg\{ \tSp(x) \sum_{j=1}^m b_j  \bigg\} \bigg)
\end{gather*}
which is, by \eqref{recurrence2}, the same as
$$
    \mathbb{E}\bigg( \exp \bigg \{ \sum_{j=1}^m b_j \tSp(z_j)  \bigg \}  \bigg) \mathbb{E}\bigg( \exp \bigg \{ -\sum_{j=m+1}^M b_j \big(\tSp(z_j+1)-\tSp(z_j)  \big) \bigg \}  \bigg).
$$
This shows that $\tYp(n)$ is mixing and therefore also ergodic.
\end{proof}
\begin{rem}
    We point out that Theorem \ref{Th_mixing} applies only if $\mathbb{E}(\Tp(1))<\infty$. For $\mathbb{E}(\Tp(1))=\infty$ one cannot construct the stationary-increment modification of $\Sp$ as in \eqref{stationary_subordinator}. As a consequence, the so-called \emph{weak ergodicity breaking} is observed, see \cite{Eli}.
\end{rem}

\begin{ack}
    M.M.\ is grateful to the Institut f\"ur Mathematische Stochastik of TU Dresden of its hospitality  during his Alexander von Humboldt fellowship at TU Dresden.The research of M.M.\ was partially supported by the Polish Ministry of Science and Higher Education grant No.~N N201 417639.
\end{ack}

\bibliographystyle{amsplain}

\end{document}